\newcommand{\Der}{\mathrm{Der}}
\newcommand{\Com}{\mathrm{Com}}
\newcommand{\Lie}{\mathrm{Lie}}
\newcommand{\Pois}{\mathrm{Pois}}
\newcommand{\As}{\mathrm{As}}
\newcommand{\SGD}{\mathrm{SGD}}
\newcommand{\GD}{\mathrm{GD}}
\DeclareSymbolFont{cyrletters}{OT2}{wncyr}{m}{n}
\DeclareMathSymbol{\Sha}{\mathalpha}{cyrletters}{"58}
\title{%
    Some generalizations of the variety of transposed Poisson algebras 
    }
\author{%
    Bauyrzhan Sartayev
    }
\abstract{%
    It is shown that the variety of transposed Poisson algebras coincides with the variety of Gelfand-Dorfman algebras in which the Novikov multiplication is commutative. The Gr\"obner-Shirshov basis for the transposed Poisson operad is calculated up to degree 4. Furthermore, we demonstrate that every transposed Poisson algebra is F-manifold. We verify that the special identities of GD-algebras hold in transposed Poisson algebras. Finally, we propose a conjecture stating that every transposed Poisson algebra is special, i.e., can be embedded into a differential Poisson algebra.
    }
\keywords{%
    Poisson algebra, Transposed Poisson algebra, Gelfand-Dorfman algebra, Polynomial identities
    }
\begin{document}

\section{Introduction}

A vector space~$A$ with a bilinear product~$\circ$ satisfying the identities
\begin{gather}
(x_1\circ x_2)\circ x_3-x_1\circ(x_2\circ x_3)=(x_2\circ x_1)\circ x_3-x_2\circ(x_1\circ x_3), \label{LeftSym} \\
(x_1\circ x_2)\circ x_3=(x_1\circ x_3)\circ x_2, \label{RightCom}
\end{gather}
is called a Novikov algebra.
A linear space $V$ with two bilinear operations $ \circ $ and $[\cdot,\cdot]$ is called 
a {\em Gelfand--Dorfman algebra} (or simply $\GD$-algebra) \cite{Xu2000}, \cite{Xu2002} if $(V,\circ)$ is a 
Novikov algebra and $(V,[\cdot,\cdot])$ is a Lie algebra, with the following 
additional identity:

\begin{gather}\label{eq:GD1}
[x_1,x_2\circ x_3]-[x_3,x_2\circ x_1]+[x_2,x_1]\circ x_3-[x_2,x_3]\circ x_1-x_2\circ [x_1,x_3]=0.
\end{gather}

Let us provide the motivation for considering these algebras. The variety of Gelfand-Dorfman algebras is a natural generalization of the variety of Novikov algebras, while the variety of  Poisson algebras is a generalization of the variety of commutative associative algebras. It is well-known that a free Novikov algebra can be embedded into a free commutative associative algebra with a derivation \cite{dzhuma}. It was later discovered \cite{KSO2019} that it is also possible to extend the mapping from free Novikov algebras to differential commutative associative algebras to the mapping from free Gelfand-Dorfman algebras to free Poisson algebras with a derivation, as follows:
\[
 \begin{aligned}
  \tau : & \;\GD \to \Pois\Der, \\
   & x_1\circ x_2\mapsto x_1 d(x_2), \\
   & [x_1,x_2]\mapsto \{x_1,x_2\},
 \end{aligned}
\]
where $\Pois\Der$ is a Poisson algebra with derivation. However, it turns out \cite{KS2022} that the mapping $\tau$ is not always injective, that is, there exist non-embeddable $\GD$-algebras into $\Pois\Der$ algebras with respect to the defined mapping $\tau$. 

We say that a $\GD$-algebra $A$ is {\em special} if it can be embedded into some $\Pois\Der$ algebra with respect to $\tau$.
Denote by $\SGD$ the variety of algebras generated by special $\GD$-algebras, namely, the class of all homomorphic images of all special $\GD$-algebras. 
A polynomial identity is said to be {\it special}  for the variety $\SGD$ if it holds on all special GD-algebras but it does not hold on all  GD-algebras. In \cite{KSO2019}, a list of special identities for the variety $\SGD$ up to degree four was obtained:
\begin{gather}
[x_1,(x_2\circ x_3)\circ x_4]-[x_1,x_2\circ x_3]\circ x_4-[x_1,x_2\circ x_4]\circ x_3+([x_1,x_2]\circ x_3)\circ x_4=0, \label{spec1} 
\end{gather}
\begin{multline}\label{spec2}
[x_4\circ x_1,x_3\circ x_2]-[x_3\circ x_1,x_4\circ x_2]+[x_3,x_4\circ x_1]\circ x_2-[x_4,x_3\circ x_2]\circ x_1 \\ 
-[x_4,x_3\circ x_1]\circ x_2+[x_3,x_4\circ x_2]\circ x_1+2([x_4,x_3]\circ x_1)\circ x_2.
\end{multline}
A basis of a free special $\GD$-algebra was constructed in \cite{KSO2019}. In \cite{GS2023} was constructed the monomial basis of the free special $\GD$-algebra in terms of $\circ$ and $[\cdot,\cdot]$. Recall that a basis of free Novikov algebra was found in \cite{dzhuma} using rooted trees, and the basis of free Lie algebra is the Lyndon-Shirshov words \cite{Shirshov1958}. Therefore, one can easily construct a basis of a free algebra that involves the Novikov multiplication $\circ$ and Lie product $[\cdot,\cdot]$. The problem of constructing a basis becomes more complicated when we consider those multiplications with additional identity (\ref{eq:GD1}).

In this paper, we consider the variety of Gelfand-Dorfman algebras whose Novikov multiplication is commutative, i.e.,
\begin{gather}
x_1\circ x_2=x_2\circ x_1. \label{comGelfand-Dorfman1} 
\end{gather}
We observe that commutative Novikov algebra satisfies the associative identity:
$$(x_1 \circ x_2)\circ x_3=(x_2 \circ x_1)\circ x_3=(x_2 \circ x_3)\circ x_1=x_1 \circ (x_2\circ x_3).$$
We call this algebra commutative $\GD$-algebra. In this case the identity (\ref{eq:GD1}) can be written as
\begin{gather}\label{eq:GD-com}
[x_1,x_2 x_3]-[x_3,x_2 x_1]+[x_2,x_1]x_3-[x_2,x_3]x_1-[x_1,x_3]x_2=0.
\end{gather}
Let us formulate the main results of this paper:
for commutative Gelfand-Dorfman operad, we determine a set of basis elements up to degree 4. Using these obtained basis elements we show that every commutative Gelfand-Dorfman algebra is $F$-manifold, and we show that the variety of commutative $\GD$-algebras coincides with the variety of transposed Poisson algebras, which has garnered considerable attention in recent years \cite{TP1,TP2,TP3,TP4,TP5}.
All mentioned results give
$$\GD/(a\circ b-b\circ a)=\textrm{TP}\subset\textrm{F-manifold},$$
where TP is a variety of transposed Poisson algebras. In other words, in commutative $\GD$-algebra the identity (\ref{eq:GD1}) can be rewritten to
$$2[x_1,x_2]x_3 = [x_1x_3,x_2]+[x_1,x_2x_3],$$
which is defining the identity of the variety of TP-algebras.

In addition, we prove that a free transposed Poisson algebra satisfies special identities (\ref{spec1}) and (\ref{spec2}). It raises a question about the speciality of transposed Poisson algebras but it is still an open question.

In this paper, all algebras are defined over a field of characteristic~0.

\section{Gr\"obner base of commutative GD-operad}

The operad theory is a powerful tool for computing a multilinear basis of fixed algebra. The main method for this calculation is the shuffling of monomials with so-called the forgetful functor $f$. For more details about this functor, see \cite{bremner-dotsenko,DotKhor2010}. 

\begin{definition}\label{shuffle operad}(\cite{bremner-dotsenko})
A shuffle operad is a monoid in the category of nonsymmetric collections of vector spaces with respect to the shuffle composition product defined as follows:
$$
\mathcal{V}\circ_\Sha\mathcal{W}(n)=\bigoplus_{r\geq 
1}\mathcal{V}(r)\otimes\bigoplus_{\pi}\mathcal{W}(|I^{(1)}|)\otimes 
\mathcal{W}(|I^{(2)}|)\otimes\ldots\otimes\mathcal{W}(|I^{(r)}|),
$$
where $\mathcal{W}$ and $\mathcal{V}$ are nonsymmetric collections, $\pi$ ranges in all set partitions $\{1,\ldots,n\}$ $=\bigsqcup_{j=1}^r I^{(j)}$ for which all parts $I^{(j)}$ are nonempty and $\mathrm{min}(I^{(1)})<\ldots<\mathrm{min}(I^{(r)})$.
\end{definition}

To shuffle given monomials, we extend the alphabet of operations until it satisfies the definition of shuffle operad. For the case of operad $\mathcal{A}s$ governed by the variety of associative algebras, we need to add additional operation $y$ for given $x$ as follows:
$$x(2\; 1)=y(1\; 2),$$
and we obtain the following relations of operad $\As_{\Sha}(x,y)$
\begin{multline*}
x(x(1\; 2)\; 3) - x(1\; x(2\; 3)),\;\;\;
x(y(1\; 2)\; 3) - y(x(1\; 3)\; 2),\;\;\;
x(x(1\; 3)\; 2) - x(1\; y(2\; 3)), \\
x(y(1\; 3)\; 2) - y(x(1\; 2)\; 3),\;\;\;
y(1\; x(2\; 3)) - y(y(1\; 3)\; 2),\;\;\;
y(1\; y(2\; 3)) - y(y(1\; 2)\; 3),
\end{multline*}
where $\As_{\Sha}(x,y)$ is a shuffled operad $\As$.
To obtain the defining relations of operad $\Com_{\Sha}(x)$ (associative and commutative) we have to add to the given relations commutative identity:
$$x(1\; 2) - y(1\; 2).$$

In the same way, we write the defining relations of operad $\Lie_{\Sha}(z)$ governed by the variety of Lie algebras:
\begin{equation}\label{eq:Lie}
    z(z(1\; 2)\; 3) - z(1\; z(2\; 3)) - z(z(1\; 3)\; 2).
\end{equation}

For our purposes, we write the defining identities of the operad $\Com\textrm{-}\GD_{\Sha}(x,z)$ governed by the variety of commutative $\GD$-algebras. It remains only to rewrite the identity (\ref{eq:GD-com}):
\begin{multline*}
z(1\; x(2\; 3)) + z(x(1\; 2)\; 3) - x(z(1\; 2)\; 3) - x(1\; z(2\; 3)) - x(z(1\; 3)\; 2),\\
-z(x(1\; 3)\; 2) + z(x(1\; 2)\; 3) + x(z(1\; 2)\; 3) - x(z(1\; 3)\; 2) - x(1\; z(2\; 3)),\\
-x(z(1\; 2)\; 3) + z(1\; x(2\; 3)) + z(x(1\; 3)\; 2) - x(z(1\; 3)\; 2) + x(1\; z(2\; 3)).
\end{multline*}

Calculating the Gr\"obner base of $\Com\textrm{-}\GD_{\Sha}(x,z)$ by means of the package \cite{DotsHij}, 
we get the following result:
\begin{center}
\begin{tabular}{c|cccccc}
 $n$ & 1 & 2 & 3 & 4 & 5 & 6 \\
 \hline 
 $\dim(\Com\textrm{-}\GD(n)) $ & 1 & 2 & 6 & 20 & 74 & 301
\end{tabular}
\end{center}
Moreover, we obtain the following result:
\begin{theorem}\label{Grobner}
The Gr\"obner basis of the operad $\Com\textrm{-}\GD_{\Sha}(x,z)$ up to degree $4$ is defined by the following relations:
\end{theorem}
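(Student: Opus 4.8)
\smallskip
\noindent\emph{Strategy of the proof.}
The plan is to certify, via Buchberger's criterion for shuffle operads \cite{DotKhor2010,bremner-dotsenko}, that the displayed set of relations---call it $G$---is a Gr\"obner basis of $\Com\textrm{-}\GD_{\Sha}(x,z)$ in every arity (= degree) $\le 4$. First I would fix the ambient object: the free shuffle operad generated by the binary operation $x$, its flip $y$ (tied by $x(1\;2)-y(1\;2)$), and the Lie bracket $z$, equipped with an admissible monomial order refining arity, chosen---as in the implementation \cite{DotsHij}---so that the leading monomials of the six $\As_{\Sha}$ relations, of $x(1\;2)-y(1\;2)$, of the Jacobi relation~(\ref{eq:Lie}), and of the three shuffled forms of~(\ref{eq:GD-com}) are the expected ones. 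Here I use the remark recorded above that a commutative Novikov algebra is associative, so that the $\{x,y\}$-part of the presentation is exactly $\Com_{\Sha}(x)$, whose reduced Gr\"obner basis is the classical quadratic one; only the interaction with $z$ through~(\ref{eq:GD-com}) requires fresh analysis.

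Next I would check that $G$ generates the same operadic ideal as the defining relations through arity~$4$: every member of $G$ lies in that ideal by construction---the arity-$4$ members arising as reductions of S-polynomials of the arity-$3$ defining relations---and, conversely, each defining relation reduces to~$0$ modulo~$G$. The observation that makes the verification finite is \emph{arity additivity}: since all three generators are binary, a tree monomial on $n$ inputs has exactly $n-1$ internal vertices, so a common multiple of two relations of arities $a$ and $b$ whose leading monomials overlap in a single vertex has arity $a+b-2$. Consequently, once $G$ is taken reduced, the only S-polynomials of arity $\le 4$ are the finitely many ones coming from overlaps among the arity-$3$ relations---which land in arity $4$---together with the overlaps of the arity-$2$ relation $x(1\;2)-y(1\;2)$; the latter, beyond the classical $\Com_{\Sha}$ case, cannot meet the arity-$4$ members of $G$ by reducedness, so they stay in arity $\le 3$, while any overlap involving an arity-$4$ member of $G$ has arity $\ge 5$.

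For the confluence step I would enumerate these S-polynomials, reduce each modulo~$G$, and verify that the remainder is~$0$; the cubic relations displayed in the theorem are precisely the (leading terms of the) S-polynomials of the defining relations that were not already reducible to~$0$ using those relations alone. Buchberger's criterion then yields that $G$ is a Gr\"obner basis of $\Com\textrm{-}\GD_{\Sha}(x,z)$ in all arities $\le 4$. As an independent cross-check I would count the tree monomials in arities $1,2,3,4$ that are divisible by no leading monomial of~$G$; these should number $1,2,6,20$, matching the dimensions tabulated above, which at once confirms the reduction computations and shows that no relation has been omitted.

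The main obstacle is combinatorial rather than conceptual: on three binary generators (plus the auxiliary $y$) the number of shuffle tree monomials is already sizeable in arity~$4$, so there are many overlaps and many competing reduction paths to keep straight, and $y$ must be carried consistently through the computation via $x(1\;2)-y(1\;2)$ so that the final basis can be written purely in $x$ and $z$. This bookkeeping is exactly what the package \cite{DotsHij} automates; the role of the argument sketched above is to explain why its output, together with the matching dimension count, is a complete and correct description of the Gr\"obner basis through arity~$4$.
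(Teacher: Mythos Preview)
Your proposal is correct and follows essentially the same approach as the paper: the paper offers no human-readable proof at all but simply reports the output of the package \cite{DotsHij}, and your sketch is precisely a Buchberger-criterion justification of why that output is valid, together with the dimension cross-check that the paper also records. The only remark is that your handling of the auxiliary generator $y$ is slightly more elaborate than necessary---once $x(1\;2)=y(1\;2)$ is imposed one may work directly in the single generator $x$ for the commutative part, as the paper's displayed relations do---but this does not affect correctness.
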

\begin{gather}
x(x(1\; 3)\; 2)  \rightarrow  x(1\; x(2\; 3)),\;\;\;
x(x(1\; 2)\; 3)  \rightarrow  x(1\; x(2\; 3)),\label{4}\\
z(x(1\; 2)\; 3)  \rightarrow  2\; x(z(1\; 3)\; 2)  -  z(1\; x(2\; 3)),\label{5}\\
x(z(1\; 2)\; 3)  \rightarrow  x(z(1\; 3)\; 2)  -  x(1\; z(2\; 3)),\label{6}\\
z(z(1\; 2)\; 3)  \rightarrow  z(z(1\; 3)\; 2)  +  z(1\; z(2\; 3)),\label{7}\\
z(x(1\; 3)\; 2)  \rightarrow  2\; x(z(1\; 3)\; 2)  -  z(1\; x(2\; 3))  -  2\; x(1\; z(2\; 3)),\label{8}\\
x(z(1\; 4)\; z(2\; 3))  \rightarrow  x(z(1\; 3)\; z(2\; 4))  -  x(z(1\; z(3\; 4))\; 2)  +  x(1\; z(2\; z(3\; 4))),\label{9}
\end{gather}
\begin{multline}\label{10}
x(z(1\; 3)\; x(2\; 4))  \rightarrow  z(1\; x(2\; x(3\; 4)))  +  3\; x(1\; x(z(2\; 4)\; 3)) -  2\; x(1\; z(2\; x(3\; 4))) \\
-  2\; x(1\; x(2\; z(3\; 4))),
\end{multline}
\begin{multline}\label{11}
x(z(1\; 4)\; x(2\; 3))  \rightarrow  z(1\; x(2\; x(3\; 4)))  +  3\; x(1\; x(z(2\; 4)\; 3))  -  2\; x(1\; z(2\; x(3\; 4))) \\  -  x(1\; x(2\; z(3\; 4))),
\end{multline}
\begin{multline}\label{12}
x(z(1\; x(3\; 4))\; 2)  \rightarrow  z(1\; x(2\; x(3\; 4)))  +  2\; x(1\; x(z(2\; 4)\; 3))  -  x(1\; z(2\; x(3\; 4))) \\ -  x(1\; x(2\; z(3\; 4))),
\end{multline}
\begin{multline}\label{13}
z(z(1\; 4)\; x(2\; 3))  \rightarrow  z(z(1\; x(3\; 4))\; 2)  -  2\; x(z(1\; 3)\; z(2\; 4))  +  z(1\; x(z(2\; 4)\; 3)),\\
+  z(1\; z(2\; x(3\; 4)))  +  2\; x(1\; z(z(2\; 4)\; 3)),
\end{multline}
\begin{multline}\label{14}
z(z(1\; 3)\; x(2\; 4))  \rightarrow  z(z(1\; x(3\; 4))\; 2)  -  2\; x(z(1\; 3)\; z(2\; 4))  +  2\; x(z(1\; z(3\; 4))\; 2)\\
+  z(1\; x(z(2\; 4)\; 3))+  z(1\; z(2\; x(3\; 4)))  -  z(1\; x(2\; z(3\; 4)))  +  2\; x(1\; z(z(2\; 4)\; 3))
\end{multline}
The rewriting rule (\ref{5}) coincides with the identity of transposed Poisson algebra which gives $\Com\textrm{-}\GD\subseteq\textrm{TP}$. Analogically, calculating the Gr\"obner base of $\textrm{TP}_{\Sha}(x,z)$, we obtain the same result as for $\Com\textrm{-}\GD_{\Sha}(x,z)$ which means $\Com$-$\GD=\textrm{TP}$.

\section{Examples of transposed Poisson algebras}

Let us turn our attention to an important class of algebras which is called $F$-manifold.
$F$-manifold algebras appear in many fields of mathematics such as singularity
theory \cite{22}, quantum $\mathbb{K}$-theory \cite{25}, integrable systems \cite{10,11,30}, operad \cite{33}, algebra \cite{40}.

\begin{definition}
A $F$-$manifold$ algebra is a triple $(\cdot,[\cdot,\cdot],X)$, where the multiplication $\cdot$ is associative and commutative, and $[\cdot,\cdot]$ is a Lie bracket with the additional identity
\begin{multline}\label{eq:manifold}
    [a_1\cdot a_2,a_3 \cdot a_4] = [a_1 \cdot a_2,a_3] \cdot a_4 +[a_1 \cdot a_2,a_4] \cdot a_3 + a_1 \cdot [a_2,a_3 \cdot a_4]+ a_2 \cdot [a_1,a_3 \cdot a_4]- \\
(a_1 \cdot a_3) \cdot [a_2,a_4]-(a_2 \cdot a_3) \cdot [a_1,a_4]-(a_2 \cdot a_4) \cdot [a_1,a_3]-(a_1 \cdot a_4) \cdot [a_2,a_3].
\end{multline}
\end{definition}

\begin{theorem}
Every transposed Poisson algebra is $F$-manifold.
\end{theorem}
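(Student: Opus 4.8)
The statement is a multilinear identity of degree four, so one option is simply to check it against the Gr\"obner basis of Theorem~\ref{Grobner}: expand each of the nine monomials of (\ref{eq:manifold}) in the shuffle alphabet $(x,z)$, apply the rewriting rules (\ref{4})--(\ref{14}) until none applies, and verify that the resulting linear combination of irreducible monomials is zero. This is a finite calculation inside $\Com\textrm{-}\GD(4)$ and is easily done with the package of \cite{DotsHij}. I would, however, prefer a self-contained hand proof using only the transposed Poisson identity $2[x,y]z=[xz,y]+[x,yz]$, commutativity and associativity of the product, and anticommutativity of the bracket (Jacobi is not needed). Note first that every monomial in (\ref{eq:manifold}) has exactly one bracket and two products, and that after one use of anticommutativity ($[a_1a_2,a_3]a_4=-a_4[a_3,a_1a_2]$) each term is of one of the shapes $a_i[a_j,a_ka_l]$, $(a_ia_j)[a_k,a_l]$, or $[a_ia_j,a_ka_l]$; the plan is to push every variable that can be moved into a bracket, on both sides, and compare.

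First I would apply the transposed Poisson identity to $[a_1a_2,a_3a_4]$, peeling $a_1$ off the left entry in one computation and $a_2$ in another, and average, obtaining
\[
[a_1a_2,a_3a_4]=a_1[a_2,a_3a_4]+a_2[a_1,a_3a_4]-\frac12[a_2,a_1a_3a_4]-\frac12[a_1,a_2a_3a_4].
\]
In the same manner I would rewrite $[a_1a_2,a_3]$ and $[a_1a_2,a_4]$, e.g.
\[
[a_1a_2,a_3]=a_1[a_2,a_3]+a_2[a_1,a_3]-\frac12[a_2,a_1a_3]-\frac12[a_1,a_2a_3],
\]
multiply these by $a_4$ and $a_3$ respectively, and substitute into the right-hand side of (\ref{eq:manifold}).

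After the substitution, the four monomials of shape $(a_ia_j)[a_k,a_l]$ produced from $[a_1a_2,a_3]a_4$ and $[a_1a_2,a_4]a_3$ cancel exactly the four explicit terms $-(a_ia_j)[a_k,a_l]$ of (\ref{eq:manifold}), and comparing the remainder with the formula for $[a_1a_2,a_3a_4]$ above reduces the whole of (\ref{eq:manifold}) to the single auxiliary identity
\[
[a_1,a_2a_3a_4]+[a_2,a_1a_3a_4]=a_3\bigl([a_1,a_2a_4]+[a_2,a_1a_4]\bigr)+a_4\bigl([a_1,a_2a_3]+[a_2,a_1a_3]\bigr).
\]
To close the argument I would apply the transposed Poisson identity once to each of the four products on the right, moving $a_3$ or $a_4$ back inside the bracket; each such step produces a bracket of two two-fold products together with a bracket of a single variable with a three-fold product. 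The four brackets of two products cancel in two anticommutative pairs, while the four remaining brackets sum precisely to the left-hand side.

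None of this is deep; the argument is bookkeeping. The only subtlety is the choice, at each invocation of the transposed Poisson identity, of which variable to peel off: an asymmetric choice makes the number of terms proliferate, whereas the symmetrization (averaging) in the first two steps keeps the computation balanced so that the cancellations in the last two steps occur. The characteristic-zero hypothesis is used only to divide by $2$.
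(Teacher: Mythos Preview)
Your argument is correct. The paper takes precisely the first route you mention: it applies the rewriting rules (\ref{4})--(\ref{14}) of Theorem~\ref{Grobner} to the monomials of (\ref{eq:manifold}) by hand (not via the package), reducing everything to irreducible shuffle monomials and observing that the sum vanishes.

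Your preferred argument is genuinely different and more elementary: it works directly from the transposed Poisson identity, commutativity/associativity of the product, and anticommutativity of the bracket, with no appeal to the Gr\"obner basis or to Jacobi. The symmetrization keeps the bookkeeping short, and the reduction to the auxiliary identity
\[
[a_1,a_2a_3a_4]+[a_2,a_1a_3a_4]=a_3\bigl([a_1,a_2a_4]+[a_2,a_1a_4]\bigr)+a_4\bigl([a_1,a_2a_3]+[a_2,a_1a_3]\bigr)
\]
is clean; one more round of the TP identity on each summand on the right indeed produces the four brackets of two-fold products cancelling in anticommutative pairs and leaves exactly the left-hand side. The paper's approach has the advantage of reusing the already-computed Gr\"obner basis and requiring no insight; your approach is self-contained, does not depend on Theorem~\ref{Grobner} (hence on the computer calculation behind it), and shows explicitly that Jacobi plays no role, so (\ref{eq:manifold}) already holds for any commutative associative product together with an anticommutative bracket satisfying $2z[x,y]=[xz,y]+[x,yz]$.
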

\begin{proof}
Since both multiplications are the same, it is enough to prove that the identity $(\ref{eq:manifold})$ holds in the free transposed Poisson algebra. For that purpose, we use the rewriting system of Theorem \ref{Grobner} to the identity (\ref{eq:manifold}). For simplicity, we write $ab$ for $a\cdot b$ for all $a$ and $b$.
Initially, let us start from the monomials $[a_1 a_2,a_3  a_4]$, $[a_1a_2,a_3]a_4$, $[a_1a_2,a_4]a_3$, $[a_1,a_3a_4]a_2$ and $a_1[a_2,a_3]a_4$:
\begin{multline*}
    [a_1 a_2,a_3 a_4]=^{(\ref{5})}2[a_1,a_3a_4]a_2-[a_1,a_2a_3a_4]=^{(\ref{12})}\\
    [a_1,a_2a_3a_4]+4a_1[a_2,a_4]a_3-2a_1[a_2,a_3a_4]-2a_1a_2[a_3,a_4],
\end{multline*}
\begin{multline*}
    -[a_1a_2,a_3]a_4=^{(\ref{5})}-2[a_1,a_3]a_2a_4+[a_1,a_2a_3]a_4=^{(\ref{6}),(\ref{10})}-[a_1,a_3]a_2a_4\\
    -[a_1,a_2a_3a_4]-3a_1[a_2,a_4]a_3+2a_1[a_2,a_3a_4]+2a_1a_2[a_3,a_4]+[a_1,a_4]a_2a_3-a_1[a_2a_3,a_4]\\
    =^{(\ref{5})}-[a_1,a_3]a_2a_4-[a_1,a_2a_3a_4]-3a_1[a_2,a_4]a_3+2a_1[a_2,a_3a_4]+2a_1a_2[a_3,a_4]\\
    +[a_1,a_4]a_2a_3-2a_1[a_2,a_4]a_3+a_1[a_2,a_3a_4],
\end{multline*}
\begin{multline*}
    -[a_1a_2,a_4]a_3=^{(\ref{5})}-2[a_1,a_4]a_2a_3+[a_1,a_2a_4]a_3=^{(\ref{6})}-2[a_1,a_4]a_2a_3+ [a_1,a_3]a_2a_4\\
    -a_1[a_2a_4,a_3]=^{(\ref{10}),(\ref{8})}-2[a_1,a_4]a_2a_3+[a_1,a_2a_3a_4]+a_1[a_2,a_4]a_3-a_1[a_2,a_3a_4],
\end{multline*}
\begin{multline*}
    -[a_1,a_3a_4]a_2=^{(\ref{12})}-[a_1,a_2a_3a_4]
    -2a_1[a_2,a_4]a_3+a_1[a_2,a_3a_4]+a_1a_2[a_3,a_4],
\end{multline*}
$$a_1[a_2,a_3]a_4=^{(\ref{6})}a_1[a_2,a_4]a_3-a_1a_2[a_3,a_4].$$
Substituting the monomials by given expressions we obtain that every TP-algebra satisfies (\ref{eq:manifold}) identity.
\end{proof} 


\section{Special identities of GD-algebras on transposed Poisson algebras}

In \cite{KS2022} proved that the class of special $\GD$-algebras forms a variety. Since every variety can be defined by a set of identities, all known special identities of $\GD$-algebra are contained in this set. There is no special identity of degree $3$, but there are two special identities (\ref{spec1}) and (\ref{spec2}) of degree $4$. Furthermore, in \cite{KS2022}, all special identities of degree $5$ were computed, and it was demonstrated that they are a consequence of the identities (\ref{spec1}) and (\ref{spec2}).

\begin{theorem}
Every TP-algebra satisfies the identities $(\ref{spec1})$ and $(\ref{spec2})$.
\end{theorem}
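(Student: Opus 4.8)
The plan is to mimic exactly the method used in the preceding $F$-manifold proof: reduce each monomial appearing in the special identities (\ref{spec1}) and (\ref{spec2}) to normal form using the rewriting system of Theorem \ref{Grobner}, and check that the resulting linear combinations vanish. Since in a transposed Poisson algebra the Novikov product $\circ$ is the commutative associative product, we identify $x_i\circ x_j$ with $x_ix_j$; the identities then involve only degree-four expressions built from the commutative product and the Lie bracket, precisely the setting handled by rules (\ref{4})--(\ref{14}).

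For (\ref{spec1}), the monomials to reduce are $[x_1,(x_2 x_3) x_4]$, $[x_1,x_2 x_3]\circ x_4 = [x_1,x_2x_3]x_4$, $[x_1,x_2 x_4]x_3$, and $([x_1,x_2] x_3)x_4 = [x_1,x_2]x_3x_4$ (after using associativity/commutativity of the product to bring the ``commutator-inside'' factors to the left where needed). Here $[x_1,(x_2x_3)x_4]$ already has its Lie argument a single letter on the left, so it is essentially in the span of normal forms, while $[x_1,x_2x_3]x_4$ needs rule (\ref{6}) (or (\ref{5}) depending on bracketing) to move the product structure inside, and $[x_1,x_2]x_3x_4$ is reduced by repeated application of (\ref{4}) and (\ref{6}). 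Collecting terms, the four contributions should cancel. The same mechanical process applies to the six or seven monomials of (\ref{spec2}): expressions of the form $[x_i x_j, x_k x_l]$ are first hit with rule (\ref{5}) to turn them into $2[x_i,x_kx_l]x_j - [x_i,x_jx_kx_l]$, then (\ref{12}) (or (\ref{10}), (\ref{11})) is applied to the surviving non-normal pieces; the mixed terms $[x_k,x_lx_i]x_j$ and $([x_i,x_j]x_k)x_l$ are reduced via (\ref{5})--(\ref{8}) and (\ref{4})--(\ref{6}) respectively. After substitution all terms should sum to zero in the free transposed Poisson algebra, hence the identities hold in every TP-algebra.

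The main obstacle is purely bookkeeping: keeping track of signs, of which bracketing of the triple product $x_ix_jx_k$ is the chosen normal form, and of correctly orienting each monomial so that the correct rewriting rule from (\ref{4})--(\ref{14}) applies (the rules are stated for specific permutations of the leaves, so one must first permute using commutativity of $\circ$ and, where a bracket sits inside, use the Lie relations). There is also a subtlety in that (\ref{spec2}) is written without ``$=0$'' in the excerpt, so one interprets it as the assertion that this polynomial is identically zero; the reduction must confirm exactly that. I would organize the computation monomial by monomial in displayed equations with the applied rule annotated above each ``$=$'' sign, exactly as in the $F$-manifold proof, and then state that summing the right-hand sides yields $0$. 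A sanity check is available independently: since Theorem \ref{Grobner} was obtained by machine computation of the Gröbner basis of $\textrm{TP}_{\Sha}(x,z)$, one can also verify that (\ref{spec1}) and (\ref{spec2}) reduce to zero directly in that Gröbner basis, which corroborates the hand computation.
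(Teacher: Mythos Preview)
Your proposal is correct and follows essentially the same approach as the paper: reduce each monomial of (\ref{spec1}) and (\ref{spec2}) to normal form via the rewriting rules of Theorem~\ref{Grobner}, annotating the applied rule over each equality, and verify the sums cancel. The paper carries this out with exactly the rule choices you anticipate (e.g.\ (\ref{6}) then (\ref{12}) on $[a_1,a_2]a_3a_4$, and (\ref{8}) on the $[a_ia_j,a_ka_l]$ terms of (\ref{spec2})), so your plan matches the paper's proof in both method and level of detail.
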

\begin{proof}
Using the rewriting system of Theorem \ref{Grobner} let us first rewrite all monomials in the identity (\ref{spec1}) and then consider their sum.
\begin{multline*}
    [a_1,a_2]a_3a_4=^{(\ref{6})}[a_1,a_3a_4]a_2-a_1[a_2,a_3a_4]=^{(\ref{12})}[a_1,a_2a_3a_4]\\
    +2a_1[a_2,a_4]a_3-a_1[a_2,a_3a_4]-a_1a_2[a_3,a_4]-a_1[a_2,a_3a_4],
\end{multline*}
\begin{multline*}
    -[a_1,a_2a_3]a_4=^{(\ref{6})}-[a_1,a_4]a_2a_3+a_1[a_2a_3,a_4]=^{(\ref{11}),(\ref{5})}-[a_1,a_2a_3a_4]\\
    -3a_1[a_2,a_4]a_3+2a_1[a_2,a_3a_4]+a_1a_2[a_3,a_4]+2a_1[a_2,a_4]a_3-a_1[a_2,a_3a_4],
\end{multline*}
\begin{multline*}
    -[a_1,a_2a_4]a_3=^{(\ref{6})}-[a_1,a_3]a_2a_4+a_1[a_2a_4,a_3]=^{(\ref{10}),(\ref{8})}-[a_1,a_2a_3a_4]\\
    -3a_1[a_2,a_4]a_3+2a_1[a_2,a_3a_4]+2a_1a_2[a_3,a_4]+2a_1[a_2,a_4]a_3-a_1[a_2,a_3a_4]-2a_1a_2[a_3,a_4].
\end{multline*}
We note that the sum of the monomials gives zero and therefore, the identity (\ref{spec1}) is an identity in a TP-algebra.

In a similar way, we show that the identity (\ref{spec2}) is an identity in a TP-algebra. 
\begin{multline*}
    [a_1a_3,a_2a_4]-[a_1a_4,a_2a_3]=^{(\ref{8})} 2[a_1,a_3]a_2a_4-2a_1[a_2a_4,a_3]-2[a_1,a_4]a_2a_3\\
    +2a_1[a_2a_3,a_4]=^{(\ref{10}),(\ref{11})}-2a_1a_2[a_3,a_4]-2a_1[a_2a_4,a_3]+2a_1[a_2a_3,a_4],
\end{multline*}
\[
[a_1a_4,a_3]a_2=^{(\ref{8})}2[a_1,a_4]a_3a_2-[a_1,a_3a_4]a_2-2a_1[a_3,a_4]a_2,
\]
\[
-[a_1a_3,a_4]a_2=^{(\ref{5})}-2[a_1,a_4]a_3a_2+[a_1,a_3a_4]a_2.
\]
\end{proof}

In \cite{KolPan2019} is given an example of exceptional $\GD$-algebra. However, it was proved that Novikov commutator algebras are special, namely, every algebra from this class can be embedded into appropriate Poisson algebra with derivation. As we saw since the variety of transposed Poisson algebras contained in the variety of $\GD$-algebras and every transposed Poisson algebra satisfy special identities (\ref{spec1})  and (\ref{spec2}), therefore, we have the following conjecture
\begin{conjecture}
Every transposed Poisson algebra is special.
\end{conjecture}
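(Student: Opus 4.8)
The plan is to reduce the conjecture to a statement about \emph{free} transposed Poisson algebras and then attack it with the operadic and Gr\"obner-basis techniques used above. By \cite{KS2022} the class of special $\GD$-algebras is a variety, hence closed under homomorphic images; since every transposed Poisson algebra is a homomorphic image of a free one, it suffices to prove that every free transposed Poisson algebra is special, i.e.\ lies in the variety $\SGD$. As a $\Com\textrm{-}\GD$-algebra automatically satisfies the $\GD$-axioms, this is equivalent to the free transposed Poisson algebra satisfying \emph{every} special identity of $\GD$-algebras; in short, the conjecture is equivalent to the statement that every special identity of $\GD$-algebras holds in every transposed Poisson algebra. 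The degree-$4$ special identities (\ref{spec1}) and (\ref{spec2}) have been verified above, and by \cite{KS2022} the degree-$5$ ones follow from them, so the open part lies in degree $\ge 6$.

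One route is to prove that the special identities of $\GD$-algebras are generated, as an ideal of the free $\GD$-algebra, by (\ref{spec1}) and (\ref{spec2}); together with the theorem above this would settle the conjecture, and the degree-$5$ computation of \cite{KS2022} is its first instance. Establishing this in all degrees is a finite-basis problem for the operad $\SGD$, about which, to our knowledge, nothing is presently known, and this is the principal obstacle on this route.

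A second route, which avoids any finite-basis statement, is to build a \emph{universal differential Poisson envelope} $U(A)$ of a transposed Poisson algebra $A$: one takes the free Poisson algebra with derivation on the underlying vector space of $A$ and quotients by the $d$-stable Poisson ideal generated by $\{a,b\}_{U}-[a,b]_{A}$ and $a\cdot_{U}d(b)-a\cdot_{A}b$ for $a,b\in A$ (the second relation being symmetric in $a,b$, it automatically forces $a\cdot_{U}d(b)=b\cdot_{U}d(a)$). The canonical map $A\to U(A)$ is a homomorphism of transposed Poisson algebras onto its image, and the conjecture is precisely a PBW-type theorem asserting that this map is injective. By the reduction above it is enough to treat $A=\textrm{TP}(X)$ free, in which case $U(A)$ is a quotient of $\Pois\Der(X)$ --- the free Poisson algebra on the differential alphabet $\{x^{(i)}:x\in X,\ i\ge 0\}$, with its PBW basis of commutative monomials in a Lyndon-Shirshov basis of the free Lie algebra \cite{Shirshov1958}, bigraded by degree in $X$ and by the number $\wt$ of applied derivations. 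One would then pick a compatible monomial order, reduce the defining relations of $U(A)$ to a Gr\"obner basis, and verify that the normal monomials of $\textrm{TP}(X)$ supplied by the rewriting system of Theorem \ref{Grobner} stay linearly independent in $U(A)$, organizing the check by degree $n$ and by the number $k$ of Lie brackets, so by $\wt=n-1-k$. The cases $n\le 6$, where $\dim\textrm{TP}(n)=1,2,6,20,74,301$, can be tested directly with \cite{DotsHij} and would either refute the conjecture or reveal the mechanism of a uniform argument.

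The hard part, on either route, is controlling the free transposed Poisson algebra in all arities. Already the rules (\ref{9})--(\ref{14}) show that the Gr\"obner basis of $\Com\textrm{-}\GD_{\Sha}(x,z)$ is intricate in degree $4$, with no evident bound on its growth, and the interplay of the Lie bracket with the commutative product dictated by the transposed Poisson identity is delicate. Without a transparent normal form for the free transposed Poisson algebra --- or a direct proof that $U(A)$ does not collapse $A$ --- the required linear independence, equivalently the nonexistence of new special identities in transposed Poisson algebras, cannot yet be certified; this is why the statement is recorded here only as a conjecture.
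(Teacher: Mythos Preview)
The statement is a \emph{conjecture}, and the paper offers no proof of it; after recording the conjecture the paper only remarks that a particular case is handled in \cite{Kol-Nes} and points to the analogous embedding problem in \cite{PK-BS}. Your write-up is likewise not a proof but an honest discussion of possible strategies (reduction to free algebras, a universal differential Poisson envelope, a PBW-type argument) together with the reasons they currently stall, and you explicitly conclude that the statement remains open --- so your treatment is consistent with the paper's, just more expansive.
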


In \cite{Kol-Nes} was given an answer for a particular case of the conjecture. In a related study \cite{PK-BS}, the embedding of left-symmetric algebras into differential perm algebras was explored, and a criterion for such embedding was discovered. In the event of a negative answer, it may be worth exploring the possibility of establishing a similar test for the present case.

\begin{center} ACKNOWLEDGMENTS   \end{center}
The author is grateful to the referees for their valuable remarks that improved the exposition. The author is also grateful to Professor N. Ismailov for his comments and advice, thanks to which the article has acquired such a full-fledged look.

This research was funded by the Science Committee of the Ministry of Science and Higher Education of the Republic of Kazakhstan (Grant No. AP14871710).



{\small\bibliography{commat}}
\EditInfo{May 23, 2023}{September 20, 2023}{Ivan Kaygorodov, Adam Chapman, Mohamed Elhamdadi}
\end{document}